\DeclareMathOperator{\Hom}{Hom}
\DeclareMathOperator{\Ext}{Ext}
\DeclareMathOperator{\Specc}{Spec}
\DeclareMathOperator{\Ass}{Ass}
\DeclareMathOperator{\Ker}{Ker}
\DeclareMathOperator{\Coker}{Coker}
\DeclareMathOperator{\Ima}{Im}
\DeclareMathOperator{\depth}{depth}
\DeclareMathOperator{\E}{E}
\DeclareMathOperator{\Tr}{Tr}
\newcommand*{\dpp}{d_{p}}
\newcommand*{\Spec}{S}
\newcommand*{\TTT}{T^{U}}
\newcommand*{\TTTT}{T^{ X_{t(\lambda_{W})+1}(C)}}
\newcommand*{\TTTTT}{T^{X_{t(\lambda_{W})+i-1}(C)}}
\newcommand*{\SSSS}{\widetilde{S}^{f}}
\newcommand*{\SSS}{\widetilde{S}^{C}}
\newcommand*{\sg}{\text{$t(\lambda_{W})$}}
\newcommand*{\eg}{\text{$\E^{i+1}(\lambda_{W})$}}
\newcommand*{\rgt}{\text{$\mathrm{r.grade}_{R}(M,C)$}}
\newcommand*{\ag}{\text{$\mathcal{A}_{C}$}}
\newcommand*{\oo}{\emptyset}
\newcommand*{\OO}{O}
\def\gc{\operatorname{G}_C}
\newtheorem{theorem}{Theorem}[section]
\theoremstyle{definition}
\theoremstyle{remark}
\theoremstyle{corollary}
\newtheorem{corollary}[theorem]{Corollary}
\theoremstyle{proposition}
\newtheorem{proposition}[theorem]{Proposition}
\theoremstyle{definition}
\numberwithin{equation}{section}
\begin{document}
\title{On the reduced grades of modules over commutative rings}
\author{Yoshinao Tsuchiya}
\address{Graduate School of Mathmatics, Nagoya University, Furocho, Chikusaku, Nagoya, Aichi 464-8602, Japan}
\email{m15034w@math.nagoya-u.ac.jp}
\begin{abstract}
Let $R$ be a commutative Noetherian ring.
Recently, Dibaei and Sadeghi have studied the reduced grade of a horizontally linked $R$-module $M$ of finite $\gc$-dimension, where $C$ is a semidualizing $R$-module.
In this paper, we highly refine their results.
In particular, our main result removes the assumptions that $M$ is holizontally linked and $M$ has finite $\gc$-dimension.
\end{abstract}
\maketitle

\section{Introduction}
A purpose of this paper is to generalize some results of a paper of Dibaei and Sadeghi \cite{S4}.
In \cite{S4} they proved many theorems for a horizontally linked module of finite $\gc$-dimension.
However, the conditions ``horizontally linked'' and ``finite $\gc$-dimension'' are too strong for some theorems.
Let $R$ be a commutative Noetherian ring, $M$ a finitely generated $R$-module, and $C$ a semidualizing module.
We say that $M$ satsfies the condition $\widetilde{S}^{C}_{i}$ if $\depth M_{p}\ge \inf \{i, \depth C_{p} \} $ for integer $i$ and all $p\in \Specc R$.
Recall that $M$ is called $n$-$C$-torsionfree if $\Ext_{R}^{i}(\Tr_{C} M,C)= 0$ for any $1\le i \le n$.
Let $t^{C}(M)$ stand for the supremum of the integers $n$ such that $M$ is $n$-$C$-torsionfree.
The main result of this paper is the following.

\begin{theorem}\label{main}
If $t^{C}(M)< \infty$, then the following are equivalent.
\begin{enumerate}[\rm(1)]
\item
For every integer $i$, $M$ is $n$-$C$-torsionfree if and only if $M$ satsfies $\widetilde{S}^{C}_{i}$.
\item
There exists an associated prime $p$ of $\Ext_{R}^{t^{C}(M)+1}(\Tr_{C}M,C)$ such that $t^{C}(M)+1 \le \depth C_{p}$.
\end{enumerate}
\end{theorem}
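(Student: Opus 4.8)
The plan is to peel condition~(1) down to a single Serre-type assertion and then to convert that assertion into~(2) by a purely local depth computation. Throughout write $t=t^{C}(M)$ and, for $i\ge 1$, abbreviate $E^{i}=\Ext_{R}^{i}(\Tr_{C}M,C)$, so that $E^{i}=0$ for $1\le i\le t$ while $E^{t+1}\neq 0$. First I would record the elementary monotonicity $\widetilde{S}^{C}_{i+1}\Rightarrow\widetilde{S}^{C}_{i}$, immediate from $\inf\{i+1,\depth C_{p}\}\ge\inf\{i,\depth C_{p}\}$, together with the tautologies that $M$ is $i$-$C$-torsionfree for $i\le t$ and fails to be so for $i\ge t+1$. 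Splitting the biconditional in~(1) along the ranges $i\le t$ and $i\ge t+1$ and using monotonicity, it collapses to the conjunction ``$M$ satisfies $\widetilde{S}^{C}_{t}$ and $M$ does not satisfy $\widetilde{S}^{C}_{t+1}$''.

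The crux --- and the step that must replace the finiteness of $\gc$-dimension used in \cite{S4} --- is the following local statement $(\star)$: for a local ring with semidualizing module $C$, if $E^{i}=0$ for $1\le i\le t$, then $\depth M<\inf\{t+1,\depth C\}$ holds if and only if $\depth C\ge t+1$ and $\mathfrak{m}\in\Ass E^{t+1}$. I would prove the base case $t=0$ from the exact sequence $0\to E^{1}\to M\to\Hom_{R}(\Hom_{R}(M,C),C)\to E^{2}\to 0$: since $\depth\Hom_{R}(X,C)\ge\inf\{2,\depth C\}$ for every $X$, once $\depth C\ge 1$ the $C$-bidual of $M$ has positive depth, so no socle element of $M$ can survive in it; hence $\mathfrak{m}\in\Ass M$ if and only if $\mathfrak{m}\in\Ass E^{1}$, which is exactly $(\star)$ for $t=0$. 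For the inductive step I would pass to an ordinary syzygy $\Omega M$, using the standard transpose--syzygy identity $E^{t+1}\cong\Ext_{R}^{t}(\Tr_{C}\Omega M,C)$ together with the semidualizing identity $\depth C_{p}=\depth R_{p}$ (so that $\depth\Omega M=\depth M+1$ in the relevant range $\depth M<\depth R=\depth C$); this lowers the parameter from $t$ to $t-1$ and carries the inequality $\depth M<\inf\{t+1,\depth C\}$ to $\depth\Omega M<\inf\{t,\depth C\}$. I expect essentially all the difficulty of the theorem to live here.

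Granting $(\star)$, the one-directional implication ``$n$-$C$-torsionfree $\Rightarrow\widetilde{S}^{C}_{n}$'' falls out at once: applying $(\star)$ at any prime $p$ with parameter $n-1$ and using $E^{n}_{p}=0$ forces $\depth M_{p}\ge\inf\{n,\depth C_{p}\}$. In particular $\widetilde{S}^{C}_{t}$ always holds, so the conjunction above simplifies and $(1)$ becomes equivalent to the single statement that $M$ fails $\widetilde{S}^{C}_{t+1}$.

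It remains to match ``$M$ fails $\widetilde{S}^{C}_{t+1}$'' with~(2). By definition this failure means there is a prime $p$ with $\depth M_{p}<\inf\{t+1,\depth C_{p}\}$. Localizing at $p$ --- legitimate because $C_{p}$ is again semidualizing, $(\Tr_{C}M)_{p}$ computes $E^{i}_{p}$ up to free summands, and $E^{i}_{p}=0$ for $1\le i\le t$ as $E^{i}=0$ globally --- the local statement $(\star)$ shows this happens precisely when $\depth C_{p}\ge t+1$ and $pR_{p}\in\Ass_{R_{p}}E^{t+1}_{p}$. Finally the standard equivalence $p\in\Ass_{R}E^{t+1}\Leftrightarrow pR_{p}\in\Ass_{R_{p}}E^{t+1}_{p}$ for the finitely generated module $E^{t+1}=\Ext_{R}^{t+1}(\Tr_{C}M,C)$ turns the existence of such a $p$ into the existence of an associated prime $p$ of $E^{t+1}$ with $t+1\le\depth C_{p}$, which is exactly~(2). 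Chaining the equivalences yields $(1)\Leftrightarrow(2)$.
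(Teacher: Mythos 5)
Your overall architecture is sound and in fact mirrors the paper's: reducing condition (1) to ``$M$ satisfies $\widetilde{S}^{C}_{t}$ but not $\widetilde{S}^{C}_{t+1}$'' and then proving the local statement $(\star)$ is precisely the content of Theorem 3.1(A)(B) specialized to $d_{p}=\depth_{R_{p}}(-)_{p}$, and your base case $t=0$ via $0\to E^{1}\to M\to M^{\dag\dag}\to E^{2}\to 0$ and $\depth\Hom_{R}(X,C)\ge\inf\{2,\depth C\}$ is fine. The gap is in the inductive step of $(\star)$, which is where all the difficulty you correctly identified actually lives, and it fails for two independent reasons.

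First, the ``standard transpose--syzygy identity'' $E^{t+1}\cong\Ext_{R}^{t}(\Tr_{C}\Omega M,C)$ is not available, and its direction is backwards. From the projective resolution one gets the four-term exact sequence $0\to\Ext_{R}^{1}(M,C)\to\Tr_{C}M\to P_{2}^{\dag}\to\Tr_{C}\Omega M\to 0$; so a clean shift exists only when $\Ext_{R}^{1}(M,C)=0$ (which is not implied by vanishing of $\Ext_{R}^{i}(\Tr_{C}M,C)$), and even then it reads $\Ext_{R}^{i}(\Tr_{C}\Omega M,C)\cong\Ext_{R}^{i-1}(\Tr_{C}M,C)$ for $i\ge 2$ --- a shift \emph{down}, not up. Syzygies are \emph{more} $C$-torsionfree than $M$ (e.g.\ $\Omega M$ is always torsionless), so passing to $\Omega M$ raises the torsionfreeness parameter and loses all contact with $E^{t+1}$. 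Second, the depth bookkeeping also moves the wrong way: $\depth\Omega M=\depth M+1$, so $\depth M<\inf\{t+1,\depth C\}$ gives only $\depth\Omega M<\inf\{t+2,\depth C+1\}$, which is strictly weaker than the inequality $\depth\Omega M<\inf\{t,\depth C\}$ required to invoke the inductive hypothesis at parameter $t-1$; the two sides of $(\star)$ are not carried to each other (nor do $\depth C\ge t+1$ and $\depth C\ge t$ match). The repair is to induct on the $C$-pushforward (cosyzygy) instead: since $E^{1}=0$, the map $M\to M^{\dag\dag}$ is injective and yields $0\to M\to C^{0}\to M_{1}\to 0$ with $C^{0}$ a summand of a finite direct sum of copies of $C$ and $\Ext_{R}^{i}(\Tr_{C}M_{1},C)\cong\Ext_{R}^{i+1}(\Tr_{C}M,C)$ for $i\ge 1$; this lowers the parameter and the depth by one simultaneously (as $\depth C^{0}=\depth C$), so both sides of $(\star)$ transform correctly. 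These pushforward sequences $0\to M^{\dag}\to C^{0}\to C^{1}\to\cdots$ are exactly the engine of the paper's proof of Theorem 3.1.
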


Auslander and Bridger proved the following; see \cite[Theorem 4.25]{S1}.
\begin{enumerate}[\rm(a)]
\item
If $M$ is $i$-$R$-torsionfree, then $M$ satsfies $\widetilde{S}^{R}_{i}$ for an integer $i$.
\item
If $M$ has finite G-dimension, then $M$ satisfies $i$-$R$-torsionfree if and only if $M$ satsfies $\widetilde{S}^{R}_{i}$ for every integer $i$.
\end{enumerate}
Here, the converse of (b) is not true in general.
For example, let $(R,m,k)$ be a non-Gorenstein local ring with positive depth.
Let $M$ be the first syzygy of the $R$-module $k$.
Then $M$ is $1$-torsionfree.
By the depth lemma, we have $\depth M=1$.
Therefore, $M$ satisfies $i$-$R$-torsionfree if and only if $M$ satsfies $\widetilde{S}^{R}_{i}$  for every integer $i$.
But $M$ has infinite $G$-dimension.

Let $\widetilde{S}^{C}(M)$ denote the supremum of the integers $i$ such that $M$ satisfies $\widetilde{S}^{C}_{i}$.
Then Theorem \ref{main}(1) nothing but says that $M$ satisfies $t^{C}(M)=\widetilde{S}^{C}(M)$.
By this we can generalize theorems of Dibaei and Sadeghi.

\section{The definitions}
Throughout this paper, let $\Lambda$, $\Gamma$ be rings, and a module means a left (or right) module. Let $M$ be $\Gamma$-module which have a resolution of finitely generated projective $\Gamma$-module, $C$ be $\Lambda$-$\Gamma$-bimodule, and $n$$\ge 1$ be an integer. 
Let $d$ be a function which detemind a nonnegative integer or $\infty$
 for every 
 $\Lambda$-modules. 
  In this paper we need many definitions. 
  
 We consider the following conditions for $d$.
Let $0\rightarrow K\rightarrow L \rightarrow N \rightarrow 0$ be a short exact sequence of $\Lambda$-modules. 
\begin{align*}
&(\widetilde{d1})\quad \text{if }d(L) > d(K)\text{, then } d(K)\ge d(N)+1,\\
&(d1)\quad \text{if }d(L) > d(K)\text{, then } d(K)= d(N)+1,\\
&(d2)\quad \text{if }d(L) > d(N)\text{, then } d(K)=d(N)+1, \\
&(d3)\quad \text{if }d(L) < d(N)\text{, then } d(K)\le d(L). \\
\end{align*}
 We have $(d1)\Rightarrow(\widetilde{d1})$, and $(\widetilde{d1})(d2)\Rightarrow (d1)$ easily and if $d$ satisfies ($\widetilde{d1}$) we have $d(0)\ge \sup\{ d(N) \mid R\text{-module } N \}$ because there exist a exact sequence $0\rightarrow 0 \rightarrow L\rightarrow L \rightarrow 0$ for any $\Lambda$-module $L$. 
 For example let $H^{i}$ be an $i$-th right derived functor of a left exact functor to an aberian category from the category of $\Lambda$-module. For a $\Lambda$-module $N$,
 we put $d(N)=\inf\{i \mid H^{i}(N)\neq 0\}$. Then $d$ satisfies $(d1)(d2)(d3)$ because there exist the long exact sequence for a short exact sequence.\\ 
  The following Propositions follows immediately.
\begin{proposition}Let $i$ be a nonnegative integer and $ 0 \rightarrow K \rightarrow L^{0}\rightarrow L^{1}\cdots \rightarrow L^{i}\rightarrow N\rightarrow 0$ be a exact sequence of $\Lambda$-module.
Then following holds.
\begin{enumerate}[$(1)$]
\item
If $d$ satisfies $(\widetilde{d1})$(resp. $(d1)$) and $d(K)-j< d(L^{j})$ for $0\le j\le i$, then $d(K)\ge d(N)+i+1$ (resp. $d(K)=d(N)+i+1$).
\item
If $d$ satisfies $(d2)$ and $d(N)+i-j<d(L^{j})$ for $0\le j\le i$, then  $d(K)=d(N)+i+1$.
\end{enumerate}
\begin{proof}
 $(1)$;  Let $k$ be a nonnegative integer. We assume that the assetion is true  for $i=k-1$ and prove the case of $i=k$. 
             We have exact sequences $0 \rightarrow K \rightarrow L^{0}\rightarrow L^{1}\cdots \rightarrow L^{k-1}\rightarrow T\rightarrow 0$   
  and $0\rightarrow T \rightarrow L^{k} \rightarrow N\rightarrow 0$. 
 By the assumption, $d(K)=d(T)+k$. Since $d(K)-k<d(L^{k})$, by the later exact sequence follows $d(K)\ge d(N)+k+1$(resp. $d(K)=d(N)+k+1$) by $(\widetilde{d1})$(resp. $(d1)$).\\
$(2)$; Let $k$ be a nonnegative integer. We assume that the assetion is true  for $i=k-1$ and prove the case of $i=k$.  There exist the exact sequence  $0\rightarrow T\rightarrow L^{1}\rightarrow L^{2}\cdots \rightarrow L^{k}\rightarrow N\rightarrow 0$ and $0\rightarrow K \rightarrow L^{0} \rightarrow T \rightarrow 0$.  
By the assumption, $d(T)=d(N)+k$. Since $d(N)+k<d(L^{0})$, we have  $d(K)=d(N)+k+1$ by the later exact sequence and $(d2)$.
\end{proof}
\end{proposition}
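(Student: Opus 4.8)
The plan is to prove both statements by induction on $i$, using in each case the relevant short-exact-sequence axiom as the engine and splicing the given long exact sequence into a short one at an appropriate end.

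For part (1), the base case $i=0$ is the sequence $0\to K\to L^0\to N\to 0$, where the hypothesis $d(K)<d(L^0)$ is exactly the premise of $(\widetilde{d1})$ (resp. $(d1)$), so the conclusion $d(K)\ge d(N)+1$ (resp. equality) is immediate. For the inductive step I would set $T=\Ima(L^{i-1}\to L^{i})=\Ker(L^{i}\to N)$ and break the sequence into the case-$(i-1)$ sequence $0\to K\to L^0\to\cdots\to L^{i-1}\to T\to 0$ and the short exact sequence $0\to T\to L^{i}\to N\to 0$. Applying the inductive hypothesis to the first sequence, whose hypotheses $d(K)-j<d(L^j)$ for $0\le j\le i-1$ are part of what is given, yields $d(K)\ge d(T)+i$ (resp. $d(K)=d(T)+i$). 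The crux is then to feed the single remaining hypothesis $d(K)-i<d(L^{i})$ into the second sequence: combined with $d(T)+i\le d(K)$ it gives $d(T)<d(L^{i})$, so $(\widetilde{d1})$ (resp. $(d1)$) applies and delivers $d(T)\ge d(N)+1$ (resp. equality), whence $d(K)\ge d(T)+i\ge d(N)+i+1$ (resp. equality).

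Part (2) is parallel but the splice must be made at the \emph{opposite} end. Here I would peel off $L^0$ first, setting $T=\Coker(K\to L^0)=\Ima(L^0\to L^1)$ and using the case-$(i-1)$ sequence $0\to T\to L^1\to\cdots\to L^{i}\to N\to 0$ together with the short exact sequence $0\to K\to L^0\to T\to 0$. After the obvious reindexing $L^{j}\mapsto L^{j-1}$, the hypotheses $d(N)+i-j<d(L^j)$ for $1\le j\le i$ are precisely those required to invoke the inductive case on the long sequence, giving $d(T)=d(N)+i$; the leftover hypothesis ($j=0$), namely $d(N)+i<d(L^0)$, then reads $d(T)<d(L^0)$, so $(d2)$ applies to the short sequence and yields $d(K)=d(T)+1=d(N)+i+1$.

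The arguments are structurally routine once the inductions are set up, so I do not anticipate a deep obstacle. The only genuine care lies in the \emph{direction of the splitting}—the right end for (1), the left end for (2)—and in verifying that the one leftover inequality (the $j=i$ hypothesis in (1), the $j=0$ hypothesis in (2)), together with the inductive statement about $d(T)$, is exactly what establishes the premise ``$d(L)>d(T)$'' needed to trigger the relevant axiom. Keeping the index bookkeeping straight through the reindexing in part (2) is the one place where a slip could easily occur.
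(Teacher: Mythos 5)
Your proposal is correct and follows essentially the same route as the paper: induction on $i$, splitting the sequence at the right end for (1) and at the left end for (2), and using the one leftover inequality to trigger the relevant axiom on the resulting short exact sequence. Your handling of the $(\widetilde{d1})$ case is in fact slightly more careful than the paper's, which writes $d(K)=d(T)+k$ where only $d(K)\ge d(T)+k$ is available.
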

We put
$(-)^{\dag} =\Hom_{\Gamma}(-,C)$.
Let $ \cdots \rightarrow P_{n}\rightarrow P_{n-1}\cdots \rightarrow P_{1}\xrightarrow{d_{1}} P_{0}
\rightarrow M
\rightarrow 0$ be the resolution of finitely generated projective modules of $M$.
Let $W$ be a $\Lambda$-module and $\lambda_{W} \colon W\rightarrow M^{\dag}$ be a homomorphism of $\Lambda$-module.
We put\\
\begin{equation*}
\E^{i}(\lambda_{W}) \colon=
\left\{
\begin{alignedat}{2}
&\Ker \lambda_{W} \quad &(i=1)\\
&\Coker \lambda_{W} \quad &(i=2)\\
&\Ext_{R}^{i-2}(M,C) \quad &(i\ge 3)                           \\
\end{alignedat}
\right..
\end{equation*}
$$\sg \colon=\inf \{i \ge 0\mid \eg \neq 0\}.$$\\
We assume $\Lambda=\Gamma=R$ is a commtative Noethrian ring, $M'$ is a finitely generated $R$-module, $C$ is a semidualizing $R$-module, $W=M'$, $M=M'^{\dag}$ and $\lambda_{M'}; M'\rightarrow M'^{\dag \dag}$ is natural map. Then we have $\E^{i}(\lambda_{W})=\Ext_{R}^{i}(\Tr_{C}M',C)$; see \cite[Proposition 4.13]{S3} and see \cite{S4} for a detail of a semidualizing module.\\
For  $0\le i\le 2$ we put
\begin{equation*}
O^{i}(\lambda_{W}) \colon=\left\{
\begin{alignedat}{2}
&\Coker(d_{1}^{\dag} \colon P_{0}^{\dag}\rightarrow P_{1}^{\dag}) &(i=0)\\
&\Ima(d_{1}^{\dag} \colon P_{0}^{\dag}\rightarrow P_{1}^{\dag})  \quad &(i=1)\\
&W &(i=2) \\
\end{alignedat}
\right..
\end{equation*}
Let $S$ be a non empty set, and $d_{()}$ be a function which determind for $p\in S$ a function which detemind  a nonnegative integer or $\infty$
 for every $\Lambda$-modules. 
 Let $f$ be a function which determind a nonnegative integer or $\infty$ for a element of $S$.
 Let  $i$ be an integer and $U$ be a subset of $S$. 
Let $N$ be $\Lambda$-module. We say that $N$ satsfies $ \widetilde{S}^{f}_{i}$ 
  if $d_{p}(N)\ge \inf \{i, f(p)\}$ for an integer $i$ and all $p\in S$.
We put
\begin{equation*}
\begin{aligned}
&X_{i}(N)\colon=\{p\in \Spec \mid i\le d_{p}(N) \},\\
&Y^{f}(N)\colon=\{p\in \Spec \mid d_{p}(N)<f(p)\},\\
&T^{U}(N)\colon=\inf \{d_{p}(N) \mid p\in U \},\\
&\widetilde{S}^{f}(N)\colon=\sup \{i \mid N \text{ satisfies }\widetilde{S}^{f}_{i}\}.\\
\end{aligned}
\end{equation*}
Note that $0\le T^{U}(N)$ and $0 \le \widetilde{S}^{f}(N)$.
Let $i$ be a integer . The following Propositions hold.
\begin{proposition}
\begin{enumerate}[\rm(1)]
\item
$\widetilde{S}^{f}(N)=\inf \{d_{p}(M)\mid p\in Y^{f}(N)\}$ holds.
\item
If $d_{p}$ satisfies $(\widetilde{d1})$ $($resp. $(d3)$, $(d1)(d2)(d3)$$)$
 for any $p\in \Spec $, then the function  $T^{U}$ also satisfies $(\widetilde{d1})$ (resp. $(d3)$, $(d1)(d2)(d3)$).
\item
If $d_{p}$ satisfies $(\widetilde{d1})$ (resp. $(d3)$) for any $p\in \Spec$, then the function  $\widetilde{S}^{f}$ also satisfies $(\widetilde{d1})$ (resp. $(d3)$).
\end{enumerate}
\end{proposition}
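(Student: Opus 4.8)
The plan is to prove the three parts in turn, reducing each to a single observation: the infima and suprema defining $T^{U}$ and $\widetilde{S}^{f}$ are always attained, since $\mathbb{Z}_{\geq 0}\cup\{\infty\}$ is well ordered. For (1) I would simply unwind the definitions. Fix an integer $i$ and $p\in\Spec$. If $p\notin Y^{f}(N)$ then $d_{p}(N)\geq f(p)\geq\inf\{i,f(p)\}$, so the defining inequality of $\widetilde{S}^{f}_{i}$ holds at $p$ automatically; if $p\in Y^{f}(N)$, i.e. $d_{p}(N)<f(p)$, a one-line check shows $d_{p}(N)\geq\inf\{i,f(p)\}$ holds if and only if $i\leq d_{p}(N)$. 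Hence $N$ satisfies $\widetilde{S}^{f}_{i}$ if and only if $i\leq d_{p}(N)$ for all $p\in Y^{f}(N)$, and taking the supremum over such $i$ gives $\widetilde{S}^{f}(N)=\inf\{d_{p}(N)\mid p\in Y^{f}(N)\}$ (with $\inf\emptyset=\infty$). The same computation records the characterization $\widetilde{S}^{f}(N)\geq m\iff N$ satisfies $\widetilde{S}^{f}_{m}$, which I use below.

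The engine for the two $(\widetilde{d1})$ statements is the elementary equivalence
\[(\widetilde{d1})\iff d(K)\geq\min\{d(L),d(N)+1\}\quad\text{for every }0\rightarrow K\rightarrow L\rightarrow N\rightarrow 0.\]
For $T^{U}$: for each $p\in U$ one has $d_{p}(K)\geq\min\{d_{p}(L),d_{p}(N)+1\}\geq\min\{T^{U}(L),T^{U}(N)+1\}$, and taking $\inf_{p\in U}$ gives $T^{U}(K)\geq\min\{T^{U}(L),T^{U}(N)+1\}$. For $\widetilde{S}^{f}$: put $m=\min\{\widetilde{S}^{f}(L),\widetilde{S}^{f}(N)+1\}$; then $L$ satisfies $\widetilde{S}^{f}_{m}$ and $N$ satisfies $\widetilde{S}^{f}_{m-1}$, so for every $p$ one gets $d_{p}(L)\geq\min\{m,f(p)\}$ and $d_{p}(N)+1\geq\min\{m,f(p)+1\}$, whence $d_{p}(K)\geq\min\{d_{p}(L),d_{p}(N)+1\}\geq\min\{m,f(p)\}$; thus $K$ satisfies $\widetilde{S}^{f}_{m}$ and $\widetilde{S}^{f}(K)\geq m$. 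In both cases this is exactly $(\widetilde{d1})$.

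For the $(d3)$ statements I would pass to a minimizer, which exists by well-ordering (the case $U=\emptyset$ being trivial, since then $T^{U}\equiv\infty$). For $T^{U}$, choose $p_{1}\in U$ with $d_{p_{1}}(L)=T^{U}(L)$; the hypothesis $T^{U}(L)<T^{U}(N)$ gives $d_{p_{1}}(L)<T^{U}(N)\leq d_{p_{1}}(N)$, so $(d3)$ for $d_{p_{1}}$ yields $T^{U}(K)\leq d_{p_{1}}(K)\leq d_{p_{1}}(L)=T^{U}(L)$. For $\widetilde{S}^{f}$ the new difficulty—and what I expect to be the main obstacle—is that the index set $Y^{f}$ varies with the module, so the minimizer realizing $\widetilde{S}^{f}(L)$ need not lie in $Y^{f}(N)$. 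Using (1), pick $p_{1}\in Y^{f}(L)$ with $d_{p_{1}}(L)=\widetilde{S}^{f}(L)$ (finite, as $\widetilde{S}^{f}(L)<\widetilde{S}^{f}(N)$), and split cases: if $p_{1}\in Y^{f}(N)$ then $d_{p_{1}}(N)\geq\widetilde{S}^{f}(N)>d_{p_{1}}(L)$, while if $p_{1}\notin Y^{f}(N)$ then $d_{p_{1}}(N)\geq f(p_{1})>d_{p_{1}}(L)$ (the last step since $p_{1}\in Y^{f}(L)$). Either way $d_{p_{1}}(L)<d_{p_{1}}(N)$, so $(d3)$ gives $d_{p_{1}}(K)\leq d_{p_{1}}(L)<f(p_{1})$, hence $p_{1}\in Y^{f}(K)$ and $\widetilde{S}^{f}(K)\leq d_{p_{1}}(K)\leq\widetilde{S}^{f}(L)$.

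Finally, for the joint assertion $(d1)(d2)(d3)$ for $T^{U}$ I would invoke the implications $(d1)\Rightarrow(\widetilde{d1})$ and $(\widetilde{d1})(d2)\Rightarrow(d1)$ already noted: having $(\widetilde{d1})$ and $(d3)$ from above, it remains only to prove $(d2)$, and then $(d1)$ follows. Assume $T^{U}(L)>T^{U}(N)$. The bound $T^{U}(K)\leq T^{U}(N)+1$ comes from a minimizer $p_{2}$ of $d_{p}(N)$, where $d_{p_{2}}(L)\geq T^{U}(L)>T^{U}(N)=d_{p_{2}}(N)$ and $(d2)$ gives $d_{p_{2}}(K)=T^{U}(N)+1$. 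For the reverse bound I would check $d_{p}(K)\geq T^{U}(N)+1$ for each $p\in U$ by the dichotomy: if $d_{p}(L)>d_{p}(K)$ then $(\widetilde{d1})$ gives $d_{p}(K)\geq d_{p}(N)+1\geq T^{U}(N)+1$, and if $d_{p}(L)\leq d_{p}(K)$ then $d_{p}(K)\geq d_{p}(L)\geq T^{U}(L)\geq T^{U}(N)+1$; taking $\inf_{p\in U}$ completes $(d2)$.
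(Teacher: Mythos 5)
Your proof is correct and follows essentially the same route as the paper: part (1) by unwinding the definition of $\widetilde{S}^{f}_{i}$ at primes inside and outside $Y^{f}(N)$, and parts (2) and (3) by applying the pointwise conditions at primes realizing the relevant infima, with the $(d1)(d2)(d3)$ case reduced to $(d2)$ via the implication $(\widetilde{d1})(d2)\Rightarrow(d1)$ exactly as the paper does. The only cosmetic difference is your reformulation of $(\widetilde{d1})$ as $d(K)\ge\min\{d(L),d(N)+1\}$, which lets you avoid choosing minimizers in those cases, but the substance of the argument is the same.
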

\begin{proof}
$(1)$;
If $Y^{f}(N)$$=$$\oo$, $\widetilde{S}^{f}(N)$=$\infty$ follows by definition. Therefore the assertion holds.
If $Y^{f}(N)$$\neq$$\oo$, we put $l$$=$$\min \{d_{p}(N)\mid p\in Y^{f}(N)\}$.
We choose $p\in Y^{f}(N)$ satisfying $ d_{p}(N)$$=$$l$.
Then, for $q\notin Y^{f}(N)$ 
$d_{q}(N)\ge \inf \{l, f(p)\}$ holds clearly. 
By our choice of $p$ for $q\in Y^{f}(N)$, inequality also holds. Therefore $l\le  \widetilde{S}^{f}(N)$.
By the definition of $\widetilde{S}^{f}(N)$, $d_{p}(N)$$\ge$$ \inf \{ \widetilde{S}^{f}(N), f(p)\}$ holds.
Since $p\in Y^{f}(N)$, 
We get $\widetilde{S}^{f}(N)$$\le$$ d_{p}(N)=l$. Therefore $(1)$ holds. \\
$(2)$; $(\widetilde{d1})$; Let $0\rightarrow K\rightarrow L \rightarrow N \rightarrow 0$ be a short exact sequence of $\Lambda$-module. 
If $\TTT(K)<\TTT(L)$, there exist $p \in \Spec$ such that $\TTT(K)=\dpp(K)<\dpp(L)$ and $p\in U$. Since $\dpp$ satisfies $(\widetilde{d1})$, we have $\dpp(K)\ge \dpp(N)+1$.
Therefore $\TTT(N)+1\le \TTT(K)$.
 $(d3)$; If $\TTT(L) <\TTT(N)$, there exist $p \in \Spec$ $\TTT(L)=\dpp(L)<\dpp(N)$, and $p\in U$. Since $\dpp$ satisfies $(d3)$, $\dpp(L)\ge \dpp(K)$ Therefore $\TTT(L)\ge \TTT(K)$.
$(d1)(d2)(d3)$; We may prove only $\TTT$ satisfies $(d2)$ by a note below of definition of $d$.
If we assume $\TTT(N)<\TTT(L)$, there exist $p \in \Spec$ $\TTT(N)=\dpp(N)<\dpp(L)$ and $p\in U$. Since $\dpp$ satisfies $(d2)$, we have $\dpp(K)=\dpp(N)+1$.
Therefore $\TTT(N)+1\ge \TTT(K)$.
  If $\TTT(K)=\TTT(L)$, we have also $\TTT(N)+1= \TTT(K)$ clearly.
  If $\TTT(K)<\TTT(L)$ we have $\TTT(N)+1=\TTT(K)$ because $T^{U}$ satisfies $(\widetilde{d1})$  .  Therefore $(d2)$ holds.\\  
$(3)$; Let $0\rightarrow K\rightarrow L \rightarrow N \rightarrow 0$ be a short exact sequence of $R$-module. 
$(\widetilde{d1})$; If $\SSSS(K)<\SSSS(L)$, there exist $p\in \Spec$ such that $\SSSS(K)=\dpp(K)<f(p)$ by $(1)$, and $d_{p}(K)<d_{p}(L)$. 
 Since $\dpp$ satisfies $(\widetilde{d1})$, $\dpp(K) \ge \dpp(N)+1$. Since $f(p)>\dpp(K)> \dpp(N)$, by $(1)$ we have $\SSSS(K) \ge \SSSS(N)+1$.
 $(d3)$; We assume $\SSSS(L)<\SSSS(N)$. there exist $p \in \Spec$ such that $\dpp(L)<\dpp(N)$, and $\dpp(L)< f(p)$. Since $\dpp$ satisfies $(d3)$, $\dpp(K)\le \dpp(L)<f(p)$. Therefore $\SSSS(K)\le \SSSS(L)$ by $(1)$.
\end{proof}
  Let $f$ be the function which is $f(p)=d_{p}(C)$ for $p \in S$. We put $\SSS=\SSSS$, $Y^{C}=Y^{f}$. 
The following Proposition give the relationships among $t$, $\TTT$, $\SSS$, $X_{i}$, $Y^{C}$. 
\begin{proposition}
We assume that $d$ satisfies $(\widetilde{d1})(d3)$. Let $N$ be $\Lambda$-module and $C'$ be a direct summand of a finite direct sum of copies of $C$. the following holds.
\begin{enumerate}
\item[$(1)$]
$i\le T^{ X_{i}(C)}(C'), \quad \SSS(C')=\infty$ holds for any integer $i$. 
\item[$(2)$]
If $d'$ is a function which satisfies $(\widetilde{d1})$ and $\sg+i-1 \le d'(C')$ for every $C'$ a direct summand of a finite direct sum of copies of $C$, then $\sg+i-2 \le d'(\OO^{i}(\lambda_{W}))$ holds for $0\le i\le 2$.
\item[$(3)$]
We have $Y^{C}(\OO^{i}(\lambda_{W}))\subseteq X_{t(\lambda_{W})+i-1}(C)$ for $0\le i\le 2$.
\item[$(4)$]
$\sg+i-2 \le \TTTTT(\OO^{i}(\lambda_{W}))
  \le \widetilde{S}^{C}(\OO^{i}(\lambda_{W}))$ for $0\le i \le 2$ holds.
\item[$(5)$]
$\sg+i-2= \TTTTT(\OO^{i}(\lambda_{W}))$ if and only if $\sg+i-2=\widetilde{S}^{C}(O^{i}(\lambda_{W}))$ for $0\le i \le 2$.
\end{enumerate}
\end{proposition}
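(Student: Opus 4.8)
The plan is to rest everything on two engines: the behaviour of each $d_{p}$ on direct summands of copies of $C$, and a single approximation exact sequence assembled from the dualized resolution, to which Proposition 2.1 applies; throughout write $t=t(\lambda_{W})$. First I would check that $(\widetilde{d1})(d3)$ force $d_{p}(A\oplus B)=\inf\{d_{p}(A),d_{p}(B)\}$, by applying the two conditions to the split sequences $0\to A\to A\oplus B\to B\to 0$ and $0\to B\to A\oplus B\to A\to 0$. Writing $C'\oplus C''\cong C^{\oplus n}$ yields $d_{p}(C')\ge d_{p}(C)$ for all $p$, and the same holds for each $P_{j}^{\dag}$, which is itself a direct summand of a finite sum of copies of $C$ since $P_{j}$ is finitely generated projective. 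Part (1) is then immediate: for $p\in X_{i}(C)$ we get $i\le d_{p}(C)\le d_{p}(C')$, so $i\le T^{X_{i}(C)}(C')$; and $d_{p}(C')\ge d_{p}(C)=f(p)$ for all $p$ makes $Y^{C}(C')$ empty, whence $\widetilde{S}^{C}(C')=\infty$ by Proposition 2.2(1).

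The crux is to build, for each $i\in\{0,1,2\}$, the exact sequence
\[
0\to O^{i}(\lambda_{W})\to P_{2-i}^{\dag}\to P_{3-i}^{\dag}\to\cdots\to P_{t-2}^{\dag}\to Z_{i}\to 0,\qquad Z_{i}\subseteq P_{t-1}^{\dag}.
\]
It is read off the complex $\Hom_{\Gamma}(P_{\bullet},C)$, prolonged on the left by $\lambda_{W}\colon W\to M^{\dag}$ when $i=2$: its cohomology in low degrees is $\E^{1}(\lambda_{W}),\E^{2}(\lambda_{W}),\E^{3}(\lambda_{W}),\dots$, so the vanishing $\E^{1}(\lambda_{W})=\cdots=\E^{t}(\lambda_{W})=0$ (which is the definition of $t$) makes the initial segment exact, $Z_{i}$ being the image of the final differential in $P_{t-1}^{\dag}$. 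The numerical fact attached to this is that a submodule $Z$ of any $P_{j}^{\dag}$ obeys $d'(Z)\ge\inf\{d'(P_{j}^{\dag}),1\}$ for every $d'$ satisfying $(\widetilde{d1})$, which is just $(\widetilde{d1})$ applied to $0\to Z\to P_{j}^{\dag}\to P_{j}^{\dag}/Z\to 0$. The degenerate ranges $t+i\le 3$ I would dispatch separately, by nonnegativity of $d'$ and by this submodule bound applied to the injection $O^{i}(\lambda_{W})\hookrightarrow P_{2-i}^{\dag}$.

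For part (2) I feed this into Proposition 2.1(1) with $K=O^{i}(\lambda_{W})$, the $L^{j}$ the intervening duals, and $N=Z_{i}$: either $d'(O^{i}(\lambda_{W}))\ge t+i-1$ and we are done, or the hypothesis $d'(K)-j<d'(L^{j})$ holds because $d'(P_{j}^{\dag})\ge t+i-1$, and the conclusion reads $d'(O^{i}(\lambda_{W}))\ge d'(Z_{i})+(t+i-3)\ge t+i-2$ since $d'(Z_{i})\ge 1$. Part (3) is the same computation with $d'=d_{p}$, using $(\widetilde{d1})(d3)$ through $d_{p}(P_{j}^{\dag})\ge d_{p}(C)$: if $p\notin X_{t+i-1}(C)$, i.e. $d_{p}(C)\le t+i-2$, the sequence gives $d_{p}(O^{i}(\lambda_{W}))\ge\inf\{t+i-2,d_{p}(C)\}=d_{p}(C)$, so $p\notin Y^{C}(O^{i}(\lambda_{W}))$, which is precisely the inclusion of part (3).

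Parts (4) and (5) are then formal. The left inequality of (4) is part (2) applied to $d'=d_{p}$ for each $p\in X_{t+i-1}(C)$, whose hypothesis $d_{p}(C')\ge t+i-1$ is the summand bound; the right inequality follows from part (3) and the identity $\widetilde{S}^{C}(N)=\inf\{d_{p}(N)\mid p\in Y^{C}(N)\}$ of Proposition 2.2(1), because an infimum over the larger set $X_{t+i-1}(C)\supseteq Y^{C}(O^{i}(\lambda_{W}))$ can only be smaller. In (5) the implication $\Leftarrow$ drops out of the chain in (4); for $\Rightarrow$, integrality of the $d_{p}$ makes the infimum $T^{X_{t+i-1}(C)}(O^{i}(\lambda_{W}))=t+i-2$ attained at some $p_{0}\in X_{t+i-1}(C)$, and there $d_{p_{0}}(O^{i}(\lambda_{W}))=t+i-2<t+i-1\le d_{p_{0}}(C)$ puts $p_{0}$ in $Y^{C}(O^{i}(\lambda_{W}))$, forcing $\widetilde{S}^{C}(O^{i}(\lambda_{W}))\le t+i-2$ and hence equality. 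I expect the genuine difficulty to lie not in any one estimate but in setting up the approximation sequence uniformly across the three values of $i$ and the small values of $t$, where the indices $2-i$, $t-2$, $t-1$ must be tracked carefully.
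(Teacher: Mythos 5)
Your proposal is correct and follows essentially the same route as the paper: reduce everything to the exact sequence obtained by dualizing the projective resolution (exact in the relevant range because $\E^{j}(\lambda_{W})=0$ for $j\le t(\lambda_{W})$), apply Proposition 2.1 with the summand bound on the $P_{j}^{\dag}$, and deduce (4)--(5) formally from (1)--(3) via Proposition 2.2(1). The only cosmetic difference is that you run the estimates pointwise with each $d_{p}$ (deriving $d_{p}(C')\ge d_{p}(C)$ from the split sequences), whereas the paper applies the same splitting argument and part (2) to the derived functions $T^{X_{i}(C)}$ and $\widetilde{S}^{C}$ directly, using that they inherit $(\widetilde{d1})(d3)$ from Proposition 2.2(2)(3).
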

\begin{proof}
$(1)$;  $i\le  T^{X_{i}(C)}, \quad \SSS(C)=\infty$
is obvious by the definition. There exist a short exact sequence $0\rightarrow C\rightarrow \oplus^{s}C\rightarrow \oplus^{s-1}C\rightarrow0$ for $s>1$,
and there exist $k>1$ and a $\Lambda$-module $C''$ such that $0\rightarrow C' \rightarrow \oplus^{k}C\rightarrow C''\rightarrow0$ and $0\rightarrow C'' \rightarrow \oplus^{k}C\rightarrow C'\rightarrow0$ is exact. The assertion follows because $T^{X_{i}(C)}$, $\SSS$ 
satisfies $(\widetilde{d1})(d3)$ by Proposition $2.2$ $(2)(3)$.
\\
$(2)$; The cases of $\sg=0$ and $\sg=1$, $i=0,1$ is clear.
If $\sg=1$, $i=2$, we have 
$ 0\rightarrow W \rightarrow M^{\dag} \rightarrow E^{2}(\lambda_{W})\rightarrow 0$, and
$ 0\rightarrow M^{\dag}\rightarrow C^{0}\rightarrow C^{1} \rightarrow O^{0}(M) \rightarrow0.$
Where each $C^{i}$ is a direct summand of a finite direct sum of copies of $C$ for $0\le i \le1$. 
 Since $d'$ satisfies $(\widetilde{d1})$, 
 we have $d'(M^{\dag})\ge 2$  by (1) and proposition $2.1$ $(1)$.
 Therefore we have $\sg \le d'(W)$ by the later exact sequence.
 We assume that $\sg \ge 2$. Then we have $W \cong M^{\dag}$. 
Since $\E^{i}(\lambda_{W})=0$ for $2\le i \le \sg $,
 we have exact sequence 
  $ 0\rightarrow M^{\dag} \rightarrow C^{0}\rightarrow C^{1}\rightarrow \cdots \rightarrow C^{\sg-1}\rightarrow L\rightarrow0.$
Where each $C^{i}$ is a direct summand of a finite direct sum of copies of $C$ for $0\le i \le \sg-1$.
Since $d'$ satisfies $(\widetilde{d1})$, $\sg+i-2 \le d'(\OO^{i}(\lambda_{W}))$ for $0\le i\le 2$ follows.\\
 $(3)$; Let be $0\le i \le 2$. We take $p\in Y^{C}(\OO^{i}(\lambda_{W}))$ Then we have $\sg+i-2 \le \widetilde{S}^{C}(W)\le \dpp(\OO^{i}(\lambda_{W}))<\dpp(C)$ by $(2)$ and Proposition $2.2(1)(3)$.
  Therefore $p\in X_{t(\lambda_{W})+i-1}(C)$.\\
 $(4)$; 
This is clear by $(2)(3)$ and Proposition $2.2(2)(3)$.\\
 $(5)$;  Let be $0\le i \le 2$. $\sg+i-2= \TTTT(\OO^{i}(\lambda_{W}))$, there exist $p\in \Spec$  such that $\sg+i-2=\dpp(\OO^{i}(\lambda_{W}))<\dpp(C)$. Therefore $\sg+i-2=\widetilde{S}^{C}(O^{i}(\lambda_{W}))$
 by $(4)$ and Proposition $2.2$ $(1)$. The converse is clear by $(4)$.
      \end{proof}
 \section{The main result}
 In this section, we assume $d_{p}$ satisfies $(d1)(d2)(d3)$ for any $p\in \Spec$.  \\ 
Let $N$ be $\Lambda$-module. We put
\begin{align*}
  A_{i}(N)\colon=\{p\in \Spec  \mid d_{p}(N)=i\}. 
   \end{align*}

The following theorem is the main result of this paper which was implied in the introduction.

\begin{theorem}
 We assume $\sg<\infty$, then
\begin{enumerate}[$(A)$]
\item
$A_{0}(\E^{\sg+1}(\lambda_{W}))\cap X_{\sg+1}(C)= A_{\sg}(W)\cap Y^{C}(W)$.
Furthermore if $\sg \ge 2$ we also have $A_{0}(\E^{\sg+1}(\lambda_{W}))\cap X_{\sg+i-1}(C)=A_{\sg+i-2}(\OO^{i}(\lambda_{W}))\cap Y^{C}(\OO^{i}(\lambda_{W}))$ for $0\le i \le 2$.
\item
$A_{0}(\E^{\sg+1}(\lambda_{W}))\cap X_{\sg+1}(C)\neq \emptyset$ if and only if $\sg=\SSS(W)$.
Furthermore if $\sg \ge 2$ we also have $A_{0}(\E^{\sg+1}(\lambda_{W}))\cap X_{\sg+i-1}(C)\neq \emptyset$ if and only if $\sg+i-2=\SSS(\OO^{i}(\lambda_{W}))$ for $0\le i \le 2$.
\end{enumerate}
\end{theorem}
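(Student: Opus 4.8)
The plan is to derive $(B)$ from $(A)$ together with Proposition 2.3, and to prove $(A)$ by localizing and reducing everything to the single case $i=0$, which is then transported along the dualized resolution. The first reduction rests on a pointwise remark: for any $p$ with $\dpp(\OO^{i}(\lambda_{W}))=\sg+i-2$ one has $p\in X_{\sg+i-1}(C)$ if and only if $p\in Y^{C}(\OO^{i}(\lambda_{W}))$, so the right-hand side of $(A)$ equals $A_{\sg+i-2}(\OO^{i}(\lambda_{W}))\cap X_{\sg+i-1}(C)$. Since $\dpp(\OO^{i}(\lambda_{W}))\ge\sg+i-2$ on $X_{\sg+i-1}(C)$ by Proposition 2.3$(4)$, this set is nonempty exactly when $\TTTTT(\OO^{i}(\lambda_{W}))=\sg+i-2$, which by Proposition 2.3$(5)$ means $\SSS(\OO^{i}(\lambda_{W}))=\sg+i-2$; combined with the set equality $(A)$ this yields $(B)$, including the first assertions (the case $i=2$, where $\OO^{2}(\lambda_{W})=W$). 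The same remark shows that proving $(A)$ is equivalent to the pointwise statement: for every $p\in X_{\sg+i-1}(C)$, $\dpp(\E^{\sg+1}(\lambda_{W}))=0$ if and only if $\dpp(\OO^{i}(\lambda_{W}))\le\sg+i-2$. Throughout I will use that on $X_{\sg+i-1}(C)$ every direct summand of a finite direct sum of copies of $C$ has $\dpp\ge\sg+i-1$ (Proposition 2.3$(1)$), and the consequence of $(d1)$ that $\dpp(K)\ge\min\{\dpp(L),\dpp(N)+1\}$ for any short exact sequence $0\to K\to L\to N\to 0$.

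The heart of the matter is the base case $i=0$ with $\sg\ge 3$. Writing $C^{j}:=P_{j}^{\dag}$ and $\partial^{j}\colon C^{j}\to C^{j+1}$, the vanishing $\E^{j}(\lambda_{W})=0$ for $1\le j\le\sg$ makes $C^{\bullet}$ exact in cohomological degrees $1,\dots,\sg-2$, and splicing in $0\to\Ima\partial^{\sg-2}\to\Ker\partial^{\sg-1}\to\E^{\sg+1}(\lambda_{W})\to 0$ produces the exact sequence $0\to\OO^{0}(\lambda_{W})\to C^{2}\to\cdots\to C^{\sg-2}\to\Ker\partial^{\sg-1}\to\E^{\sg+1}(\lambda_{W})\to 0$ (degenerating to $0\to\OO^{0}(\lambda_{W})\to\Ker\partial^{\sg-1}\to\E^{\sg+1}(\lambda_{W})\to 0$ when $\sg=3$). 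Here $\dpp(C^{j})\ge\sg-1$, while two applications of the depth inequality to $\Ker\partial^{\sg-1}\hookrightarrow C^{\sg-1}$ and $\Ima\partial^{\sg-1}\hookrightarrow C^{\sg}$ give $\dpp(\Ker\partial^{\sg-1})\ge\min\{\sg-1,2\}=2$. With these bounds the gap hypotheses of Proposition 2.1$(1)$ and $(2)$ hold in the two relevant regimes, and the two parts give respectively $\dpp(\OO^{0}(\lambda_{W}))=\sg-2\Rightarrow\dpp(\E^{\sg+1}(\lambda_{W}))=0$ and its converse. The boundary case $\sg=2$ is read off directly from $0\to\E^{3}(\lambda_{W})\to\OO^{0}(\lambda_{W})\to\Ima\partial^{1}\to 0$ using $\dpp(\Ima\partial^{1})\ge 1$.

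I would then transport the equivalence upward. For $i=1,2$ the short exact sequences $0\to\OO^{i}(\lambda_{W})\to C^{2-i}\to\OO^{i-1}(\lambda_{W})\to 0$ have $\dpp(C^{2-i})\ge\sg+i-1$ strictly larger than every target value on $X_{\sg+i-1}(C)$, so $(d1)$ and $(d2)$ force $\dpp(\OO^{i}(\lambda_{W}))=\dpp(\OO^{i-1}(\lambda_{W}))+1$ in the relevant range; since the pointwise equivalence for $\OO^{i-1}(\lambda_{W})$ already holds on the larger set $X_{\sg+i-2}(C)\supseteq X_{\sg+i-1}(C)$, this propagates it to $\OO^{i}(\lambda_{W})$ and finishes the furthermore part of $(A)$ for $\sg\ge 2$. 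The first assertion of $(A)$ for the excluded values $\sg=0,1$ is handled separately from $0\to\E^{1}(\lambda_{W})\to W\to M^{\dag}\to\E^{2}(\lambda_{W})\to 0$: the inputs $\dpp(M^{\dag})\ge 2$ (when $\sg=1$) and $\dpp(\Ima\lambda_{W})\ge 1$ (when $\sg=0$), again obtained from the depth inequality, let $(d1)$, $(d2)$, $(d3)$ produce the equivalence directly.

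The step I expect to be most delicate is the depth bookkeeping on the auxiliary syzygies $\Ker\partial^{\sg-1}$, $\Ima\partial^{j}$, $M^{\dag}$ and $\Ima\lambda_{W}$, together with the verification that the degenerate sequences in the small cases $\sg\in\{0,1,2\}$ still feed correctly into $(d1)$--$(d3)$. Once the uniform lower bounds on these modules are secured, the invocations of Propositions 2.1 and 2.3 are routine.
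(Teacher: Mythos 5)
Your proposal is correct and follows essentially the same route as the paper: both arguments run the axioms $(d1)$--$(d3)$ pointwise along the dualized projective resolution via Propositions 2.1--2.3, split into the cases $\sg=0$, $\sg=1$, $\sg\ge 2$ using the same exact sequences $0\to\E^{1}(\lambda_{W})\to W\to M^{\dag}\to\E^{2}(\lambda_{W})\to 0$ and the spliced complex of $C^{j}$'s, and deduce $(B)$ from $(A)$ together with Proposition 2.3$(4)(5)$. The only differences are bookkeeping: for $\sg\ge 2$ you prove the $i=0$ case first and propagate to $i=1,2$ through $0\to\OO^{i}(\lambda_{W})\to C^{2-i}\to\OO^{i-1}(\lambda_{W})\to 0$, splicing the tail through $\Ker\partial^{\sg-1}$ (with the bound $\dpp(\Ker\partial^{\sg-1})\ge 2$) where the paper instead truncates at $L=\Coker\partial^{\sg-2}$ and uses the auxiliary sequences involving $N$ and $U$ (with the bound $\dpp(N)\ge 1$); these carry the same information and both feed correctly into Proposition 2.1.
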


\begin{proof}
 $(A)$; If $\sg=0$ we have an exact sequence $0\rightarrow \E^{\sg+1}(\lambda_{W})\rightarrow W \rightarrow M^{\dag}$. 
We take $p\in A_{0}(\E^{\sg+1}(\lambda_{W}))\cap X_{\sg+1}(C)$. 
Then we have $\dpp(W)=0$ by the exact sequence since $d_{p}$ satisfies $(\widetilde{d1})$.
 Since $p \in X_{\sg+1}(C)$ we have $0<\dpp(C)$. Therefore we have $p\in A_{\sg}(W)\cap Y^{C}(W)$. Conversly we take  
 $p\in A_{\sg}(W)\cap Y^{C}(W)$. 
 Since $\dpp$ satisfies $(\widetilde{d1})$, we get $\dpp(M^{\dag})\ge 1$  by an exact sequence $ 0\rightarrow M^{\dag}\rightarrow C^{0} \rightarrow O^{1}(\lambda_{W}) \rightarrow0.$ (We always denote a direct summand of a finite direct sum of copies of $C$ by $C^{i}$ for integer $i$.)
Therefore we have $\dpp(\Ima(\lambda_{W}))\ge 1$. Therefore since $\dpp$ satisfies $(d3)$, we get $\dpp(\E^{\sg+1}(\lambda_{W}))=0$. Therefore $p\in A_{0}(\E^{\sg+1}(\lambda_{W}))\cap X_{\sg+1}(C)$. If $\sg=1$, we have an exact sequence $0 \rightarrow W \rightarrow M^{\dag}\rightarrow \E^{\sg+1}(\lambda_{W})\rightarrow 0$.
 We take $p\in A_{0}(\E^{\sg+1}(\lambda_{W}))\cap X_{\sg+1}(C)$.  Since $\dpp$ satisfies $(\widetilde{d1})$, we get $\dpp(M^{\dag})\ge 2$  by an exact sequence $ 0\rightarrow M^{\dag}\rightarrow C^{0}\rightarrow C^{1} \rightarrow O^{0}(\lambda_{W}) \rightarrow0$ and Proposition $2.1$ $(1)$. Therefore since $\dpp$ satisfies $(d2)$, we have $\dpp(W)=1$. Therefore we have $p\in A_{\sg}(W)\cap Y^{C}(W)$.   Conversly we take $p\in A_{\sg}(W)\cap Y^{C}(W)$. 
 Since $\dpp$ satisfies $(\widetilde{d1})$, we have $\dpp(\E^{\sg+1}(\lambda_{W}))=0$. Therefore $p\in A_{0}(\E^{\sg+1}(\lambda_{W}))\cap X_{\sg+1}(C)$.
We assume that $\sg \ge 2$ $0\le i\le2$. Then we have $W \cong M^{\dag}.$ There exist the exact sequences $0\rightarrow M^{\dag} \rightarrow C^{0}\rightarrow C^{1}\rightarrow \cdots \rightarrow C^{\sg-1}\rightarrow L\rightarrow0 $, $0\rightarrow \E^{\sg+1}(\lambda_{W})\rightarrow L\rightarrow N\rightarrow 0$, and $0\rightarrow N\rightarrow C^{\sg} 
\rightarrow U\rightarrow 0$, where $L, N, U$ is $\Lambda$-modules. We take $p\in A_{0}(\E^{\sg+1}(\lambda_{W}))\cap X_{\sg+i-1}(C)$. We get $\dpp(L)=0$ by the second     
exact sequence because $\dpp$ satisfies $(\widetilde{d1})$. We get $p\in A_{\sg+i-2}(\OO^{i}(\lambda_{W}))\cap Y_{C}(\OO^{i}(\lambda_{W}))$ by the first exact sequence and Proposition $2.1$ $(2)$ since $\dpp$ satisfies $(d2)$.
Conversly we take $p\in A_{\sg+i-2}(\OO^{i}(\lambda_{W}))\cap Y^{C}(\OO^{i}(\lambda_{W}))$. Then we get $\dpp(L)=0$ by second exact sequence and Proposition $2.1$ $(1)$ since $\dpp$ satisfies $(d1)$. We have $\dpp(N)\ge 1$  by the third exact sequence because $\dpp$ satisfies $(\widetilde{d1})$. Therefore since $\dpp$ satisfies $(d3)$, we get $p\in A_{0}(\E^{\sg+1}(\lambda_{W}))\cap X_{\sg+i-2}(C)$.\\
$(B)$; We put $S'=\{1\}$ and $d'_{1}=\TTTT$. 
$($resp. if $2 \le t(\lambda_{W})$, $d'_{1}=\TTTTT$ for $0\le i \le2$$)$.
Then we have $A_{0}(\E^{\sg+1}(\lambda_{W}))\cap X_{\sg+1}(C)= A_{\sg}(W)\cap Y^{C}(W)$ $($resp. if $\sg \ge 2$       
$A_{0}(\E^{\sg+1}(\lambda_{W}))\cap X_{\sg+i-1}(C)= A_{\sg+i-2}(\OO^{i}(\lambda_{W}))\cap Y^{C}(\OO^{i}(\lambda_{W}))$  for $0\le i \le 2$$)$
for $S'$ and $d'_{()}$
by Proposition $2.2$ $(2)$ and $(A)$. We have $A_{0}(\E^{\sg+1}(\lambda_{W}))= A_{\sg}(W)$ $($resp.  if $\sg \ge 2$ $A_{0}(\E^{\sg+1}(\lambda_{W}))=A_{\sg+i-2}(\OO^{i}(\lambda_{W}))$ for $0\le i\le 2$ $)$ by Proposition $2.3$ $(1)$. Therefore for $S$ and $d$ we get 
$A_{0}(\E^{\sg+1}(\lambda_{W}))\cap X_{\sg+1}(C)\neq \emptyset$
if and only if $\sg=\TTTT(W)$ $($resp. if $\sg \ge 2$, $A_{0}(\E^{\sg+1}(\lambda_{W}))\cap X_{\sg+i-1}(C) \neq \emptyset$ if and only if $\sg+i-2=\TTTTT(\OO^{i}(\lambda_{W}))$ for $0\le i\le 2$ $)$. Therefore the assertion follows by Proposition $2.3$ $(5)$. 
\end{proof}
 We can prove some results of a paper of Dibaei and Sadeghi (\cite{S2}) by Theorem $3.1$.
   We recommend to see (\cite{S2})  for various notations which was used in their paper and (\cite{S4}) for a detail of semidalizing module and $G_{C}$-dimension.
   We assume that $\Lambda=\Gamma=R$ is a semiperfect commutative ring and $C$ is a semidualizing $R$-module. We put $\Spec=\Specc R$, $d_{p}(N)=\depth_{R_{p}}(N_{p})$ for $R$-module $N$
.  \begin{corollary}\cite[Lemma 4.4]{S2}
Let $M$ be a horizontally-linked $R$-module of finite and positive $G_{C}$-dimension. Set
 $n=\rgt$. If $\lambda M \in \ag$ $($e.g. $\mathrm{pd}_{R}(\lambda M)<\infty$$)$, then
 $$\Ass \Ext_{R}^{n}(M,C)=\{p\in \Specc R\mid G_{C_{p}}\text{-}\dim_{R_{p}}M_{p}\neq 0, \depth_{R_{p}}((\lambda M)_{p})=n=\mathrm{r.grade}_{R_{p}}(M_{p}, C_{p})\}.$$
\end{corollary}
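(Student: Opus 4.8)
The plan is to deduce the statement from Theorem 3.1 by specializing the abstract framework to the depth function and to the horizontal link of $M$. First I would record that in the present setting $d_{p}(-)=\depth_{R_{p}}(-)_{p}$ is of the form $\inf\{i\mid H^{i}(-)\neq 0\}$ for the local cohomology (equivalently $\Ext_{R_{p}}^{\bullet}(k(p),-)$) functors, so each $d_{p}$ satisfies $(d1)(d2)(d3)$; this is exactly the standing hypothesis of Section 3, so Proposition 2.2 and Theorem 3.1 apply. I would then take $W=\lambda M$. Since $M$ is horizontally linked, $M\cong\lambda\lambda M=\Omega_{C}\Tr_{C}(\lambda M)$, so there is a short exact sequence $0\to M\to G\to \Tr_{C}(\lambda M)\to 0$ with $G$ a finite sum of copies of $C$, and $M$ is $C$-torsionless, i.e.\ $M\hookrightarrow C^{k}$ for some $k$.

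Next I would set up the dictionary between the two statements. By \cite[Proposition 4.13]{S3} applied to $W=\lambda M$ one has $\E^{i}(\lambda_{\lambda M})=\Ext_{R}^{i}(\Tr_{C}(\lambda M),C)$, while $\E^{1}(\lambda_{\lambda M})=\Ker(\lambda M\to(\lambda M)^{\dag\dag})=0$ because $\lambda M$ is again horizontally linked, hence torsionless. Applying $\Hom_{R}(-,C)$ to $0\to M\to G\to\Tr_{C}(\lambda M)\to 0$ and using $\Ext_{R}^{\ge 1}(G,C)=0$ gives $\E^{i}(\lambda_{\lambda M})\cong\Ext_{R}^{i-1}(M,C)$ for all $i\ge 2$. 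Consequently $t(\lambda_{\lambda M})$ — the first index $i$ with $\E^{i+1}\neq 0$ — equals $\mathrm{r.grade}_{R}(M,C)=n$, and $\E^{n+1}(\lambda_{\lambda M})\cong\Ext_{R}^{n}(M,C)$. Here the hypotheses enter: positive $\gc$-dimension forces $1\le n<\infty$ (so the running assumption $t(\lambda_{\lambda M})<\infty$ holds), and $\lambda M\in\mathcal{A}_{C}$, i.e.\ $\gc$-$\dim\lambda M<\infty$, is what permits applying the Auslander--Bridger formula to $\lambda M$ below.

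With this dictionary I would invoke the first equality of Theorem 3.1(A) (the case $\OO^{2}(\lambda_{W})=W$): $A_{0}(\E^{n+1}(\lambda_{\lambda M}))\cap X_{n+1}(C)=A_{n}(\lambda M)\cap Y^{C}(\lambda M)$. Since $\depth_{R_{p}}N_{p}=0$ means $p\in\Ass_{R}N$, the left side is $\Ass_{R}\Ext_{R}^{n}(M,C)\cap X_{n+1}(C)$. On the right, $A_{n}(\lambda M)$ matches the condition $\depth_{R_{p}}(\lambda M)_{p}=n$; using $\depth_{R_{p}}C_{p}=\depth R_{p}$ (as $C$ is semidualizing) together with $\gc$-$\dim_{R_{p}}(\lambda M)_{p}=\depth R_{p}-\depth_{R_{p}}(\lambda M)_{p}$, the condition $p\in Y^{C}(\lambda M)$ becomes $\gc$-$\dim_{R_{p}}(\lambda M)_{p}\neq 0$, which by linkage-invariance of total $C$-reflexivity ($\gc$-$\dim_{R_{p}}(\lambda M)_{p}=0\iff\gc$-$\dim_{R_{p}}M_{p}=0$) is the same as $\gc$-$\dim_{R_{p}}M_{p}\neq 0$. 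Finally, for $p\in\Ass_{R}\Ext_{R}^{n}(M,C)$ the vanishing $\Ext_{R}^{i}(M,C)=0$ for $1\le i<n$ localizes, so $\mathrm{r.grade}_{R_{p}}(M_{p},C_{p})=n$ automatically, giving the last condition in the corollary.

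The one remaining point — and the main obstacle — is to drop the intersection with $X_{n+1}(C)$, i.e.\ to prove $\Ass_{R}\Ext_{R}^{n}(M,C)\subseteq X_{n+1}(C)$, so that the left side reduces to $\Ass_{R}\Ext_{R}^{n}(M,C)$. For $p\in\Ass_{R}\Ext_{R}^{n}(M,C)$ finite $\gc$-dimension gives $n\le\gc$-$\dim_{R_{p}}M_{p}\le\depth R_{p}=\depth_{R_{p}}C_{p}$, so $\depth_{R_{p}}C_{p}\ge n\ge 1$; and $C$-torsionlessness $M\hookrightarrow C^{k}$ with the depth lemma yields $\depth_{R_{p}}M_{p}\ge\min\{1,\depth_{R_{p}}C_{p}\}=1$. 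A short case analysis comparing $\gc$-$\dim_{R_{p}}M_{p}=n$ with $\gc$-$\dim_{R_{p}}M_{p}\ge n+1$, using $\depth R_{p}=\gc$-$\dim_{R_{p}}M_{p}+\depth_{R_{p}}M_{p}$, upgrades $\depth_{R_{p}}C_{p}\ge n$ to $\depth_{R_{p}}C_{p}\ge n+1$, i.e.\ $p\in X_{n+1}(C)$. Combining this containment with the translated Theorem 3.1(A) identifies $\Ass_{R}\Ext_{R}^{n}(M,C)$ with the right-hand set of the corollary.
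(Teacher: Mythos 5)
Your overall strategy---specialize Theorem 3.1(A) to $d_p=\depth_{R_p}(-)_p$, identify $A_0$ of the relevant $\E$-module with an associated-prime set, and then show separately that this set lands inside $X_{n+1}(C)$ so the intersection can be dropped---is the right shape, and your final containment argument via the Auslander--Bridger formula is fine. But there is a genuine gap at the heart of your dictionary. You choose $W=\lambda M$ with the natural biduality map, so that $\E^{i}(\lambda_{\lambda M})=\Ext_R^{i}(\Tr_C(\lambda M),C)$, and you then claim a short exact sequence $0\to M\to G\to \Tr_C(\lambda M)\to 0$ with $G$ a finite direct sum of copies of $C$, on the grounds that ``$M\cong\lambda\lambda M=\Omega_C\Tr_C(\lambda M)$.'' Horizontal linkage is defined with respect to $R$, not $C$: it gives $M\cong\Omega_R\Tr_R(\lambda M)$, i.e.\ an exact sequence $0\to M\to P\to \Tr(\lambda M)\to 0$ with $P$ \emph{projective}. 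Since $\Tr_C N\cong \Tr N\otimes_R C$, what the hypothesis $\lambda M\in\mathcal{A}_C$ actually buys (via the Tor-vanishing it provides) is $\Omega_C\Tr_C(\lambda M)\approx M\otimes_R C$, not $M$; equivalently, $\Ext_R^{i}(\Tr_C(\lambda M),C)\cong\Ext_R^{i}(\Tr(\lambda M),R)\cong\Ext_R^{i-1}(M,R)$ for $i\ge 2$ (this is exactly the comparison used in the source paper, with $\Hom(M\otimes C,C)\cong\Hom(M,R)$ by adjunction). So your module $\E^{n+1}(\lambda_{\lambda M})$ computes $\Ext_R^{n}(M,R)$, not $\Ext_R^{n}(M,C)$, and the identifications $t(\lambda_{\lambda M})=\mathrm{r.grade}_R(M,C)$ and $A_0(\E^{n+1})=\Ass\Ext_R^{n}(M,C)$ on which everything downstream depends do not hold.

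The paper sidesteps this entirely by taking $W=M^{\dag}$ and $\lambda_W=\mathrm{id}_{M^{\dag}}$, so that $\E^{i}(\lambda_W)=\Ext_R^{i-2}(M,C)$ holds by definition (hence $t(\lambda_W)=n+1$ and $\E^{t(\lambda_W)+1}(\lambda_W)=\Ext_R^{n}(M,C)$ with no homological input), and then applies the $i=1$ case of Theorem 3.1(A), whose right-hand side involves $O^{1}(\lambda_W)=\Ima(d_1^{\dag})$, the $C$-syzygy of $\Tr_C M$. The role of $\lambda M\in\mathcal{A}_C$ there is only to transfer depths, $\depth_{R_p}(\Tr M)_p=\depth_{R_p}(\Tr_C M)_p$, which converts $\depth_{R_p}(O^{1}(\lambda_W))_p$ into $\depth_{R_p}(\lambda M)_p$ in the relevant range. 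If you want to keep your choice $W=\lambda M$, you would have to replace the target of the statement by $\Ext_R^{n}(M,R)$ and then separately compare $\Ass\Ext_R^n(M,R)$ with $\Ass\Ext_R^n(M,C)$, which is an additional nontrivial step; as written, the proof does not establish the corollary. (A smaller point: your reduction of ``$\gc\text{-}\dim_{R_p}(\lambda M)_p\neq 0$'' to ``$\gc\text{-}\dim_{R_p}M_p\neq 0$'' by ``linkage-invariance of total $C$-reflexivity'' also needs justification, since $\lambda=\Omega_R\Tr_R$ does not obviously preserve total $C$-reflexivity for $C\neq R$.)
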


\begin{proof}
 We put $W=M^{\dag}$, and $\lambda_{W}=id_{M^{\dag}}$. Note we have $\rgt=\sg-1$.   
For $p \in \Ass \Ext_{R}^{n}(M,C)$, 
$G_{C_{p}}\text{-}\dim_{R_{p}}M_{p}\neq 0$ 
and $n=\mathrm{r.grade}_{R_{p}}(M_{p}, C_{p})$ is obvious.
 By $\lambda M \in \ag$ we have $\depth_{R_{p}}((\Tr M)_{p})=\depth_{R_{p}}((\Tr_{C}M)_{p})$;
 see the proof of \cite[Lemma $4.4$]{S2}. 
Since $M$ have positive finite $G_{C}$-dimension we get $A_{0}(\E^{\sg+1}(\lambda_{W}))\subseteq X_{\sg}(C)$ (see \cite[Proposition $6.1.7$ (vi), $6.4.2$]{S4}),   
$\Ass \Ext_{R}^{n}(M,C)=\{p\in \Specc R\mid  \depth_{R_{p}}((\OO^{1}(\lambda_{W}))_{p})=n<\depth_{R_{p}}(C_{p})\}$ holds by Theorem $3.1$. Since $\depth_{R_{p}} ((\OO^{1}(\lambda_{W}))_{p})<\depth_{R_{p}} C_{p}$  we have $\depth_{R_{p}}((\Tr M)_{p})+1 =\depth_{R_{p}}((\Tr_{C} M)_{p})+1=\depth_{R_{p}}((\OO^{1}(\lambda_{W}))_{p})$  and as $\depth_{R_{p}}((\Tr M)_{p})<\depth_{R_{p}}R_{p}$ we have  $\depth_{R_{p}}((\Tr M)_{p})+1=\depth_{R_{p}}((\lambda M)_{p})$. Therefore $\depth_{R_{p}}((\OO^{1}(\lambda_{W}))_{p})=\depth_{R_{p}}((\lambda M)_{p})$
Similarly  if  $\depth_{R_{p}}((\lambda M)_{p})<\depth_{R_{p}}(R_{p})$, $\depth_{R_{p}}((\OO^{1}(\lambda_{W})_{p})=\depth_{R_{p}}((\lambda M)_{p})$ holds. Therefore assertion follows.
\end{proof}

\begin{corollary}\cite[Theorem 4.6 (ii)]{S4}
Let $M$ be a stable $R$-module of finite $G_{C}$-dimension and $\lambda M \in \ag$  $(e.g.$ $\mathrm{pd}_{R}(\lambda M)<\infty$$)$.
 For an integer $n>0$, the following statement hold true.
$$\quad \text{If } M \text{is horizontally linked, then } \rgt \text{$\ge$ n if and only if } \lambda M \text{satisfies }  \widetilde{S}_{n}.$$ 
\end{corollary}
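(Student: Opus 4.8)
The plan is to run the same specialization of the Section~2 machinery used in the previous corollary: take the distinguished module of the framework to be the given $M$, and set $W=M^{\dag}$ and $\lambda_{W}=\id_{M^{\dag}}$. Then $\E^{1}(\lambda_{W})=\E^{2}(\lambda_{W})=0$ while $\E^{i}(\lambda_{W})=\Ext_{R}^{i-2}(M,C)$ for $i\ge 3$, so locating the first non-vanishing term gives $\sg=1+\rgt$; in particular $\sg\ge 2$ and $\rgt=\sg-1$. The module $\OO^{1}(\lambda_{W})=\Ima(d_{1}^{\dag})$ is the $C$-syzygy of $\Tr_{C}M$, and, exactly as in the previous corollary, the hypotheses that $M$ is horizontally linked and that $\lambda M\in\ag$ identify the local depths of $\lambda M$ with those of $\OO^{1}(\lambda_{W})$ (through $\depth_{R_{p}}(\Tr M)_{p}=\depth_{R_{p}}(\Tr_{C}M)_{p}$ together with the syzygy depth lemma). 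Hence ``$\lambda M$ satisfies $\widetilde{S}_{n}$'' is the same as ``$\SSS(\OO^{1}(\lambda_{W}))\ge n$'', and the entire corollary collapses to the single numerical identity $\SSS(\OO^{1}(\lambda_{W}))=\sg-1$.

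One half of this identity is automatic: Proposition~2.3(4) with $i=1$ gives $\sg-1\le\SSS(\OO^{1}(\lambda_{W}))$, which already settles the implication ``$\rgt\ge n\Rightarrow\lambda M$ satisfies $\widetilde{S}_{n}$''. The reverse inequality is the crux, and it is where finiteness of $\gc$-dimension is indispensable. In the main case $0<\gc\text{-}\dim M<\infty$ we have $\sg<\infty$, so $\E^{\sg+1}(\lambda_{W})\ne 0$ and hence $A_{0}(\E^{\sg+1}(\lambda_{W}))\ne\oo$; moreover finite $\gc$-dimension supplies the inclusion $A_{0}(\E^{\sg+1}(\lambda_{W}))\subseteq X_{\sg}(C)$ established in the previous corollary. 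Therefore $A_{0}(\E^{\sg+1}(\lambda_{W}))\cap X_{\sg}(C)=A_{0}(\E^{\sg+1}(\lambda_{W}))\ne\oo$, and feeding this non-empty intersection into the ``furthermore'' clause of Theorem~3.1(B) with $i=1$ (legitimate since $\sg\ge 2$) returns precisely $\sg-1=\SSS(\OO^{1}(\lambda_{W}))$, completing the identity.

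With $\SSS(\OO^{1}(\lambda_{W}))=\sg-1=\rgt$ in hand, both directions fall out together: $\rgt\ge n\iff\sg-1\ge n\iff\SSS(\OO^{1}(\lambda_{W}))\ge n\iff\lambda M$ satisfies $\widetilde{S}_{n}$. The forward implication I expect to be routine, being a soft consequence of Proposition~2.3; the genuine obstacle is the reverse one, i.e. upgrading the inequality of Proposition~2.3(4) to the equality $\SSS(\OO^{1}(\lambda_{W}))=\sg-1$ through Theorem~3.1(B), as this is exactly the step that collapses without finite $\gc$-dimension (which is what forces $A_{0}(\E^{\sg+1}(\lambda_{W}))\subseteq X_{\sg}(C)$). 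A secondary point needing care is the depth bookkeeping of the first paragraph: the identification $\depth_{R_{p}}(\lambda M)_{p}=\depth_{R_{p}}(\OO^{1}(\lambda_{W}))_{p}$ is clean only on primes with $\depth_{R_{p}}(\Tr M)_{p}<\depth_{R_{p}}R_{p}$, so one must check, using $\lambda M\in\ag$ and horizontal linkage as before, that the excluded primes do not affect the Serre condition $\widetilde{S}_{n}$. The degenerate case $\gc\text{-}\dim M=0$, where $\rgt=\infty$ and $\lambda M$ is totally $C$-reflexive so that $\SSS(\OO^{1}(\lambda_{W}))=\infty$ as well, is disposed of directly.
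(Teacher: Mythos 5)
Your proposal is correct and follows essentially the same route as the paper: both specialize to $W=M^{\dag}$, $\lambda_{W}=\mathrm{id}_{M^{\dag}}$, use the inclusion $A_{0}(\E^{\sg+1}(\lambda_{W}))\subseteq X_{\sg}(C)$ supplied by finite positive $G_{C}$-dimension to make the intersection in Theorem 3.1(B) (case $i=1$) non-empty and thereby obtain $\sg-1=\SSS(\OO^{1}(\lambda_{W}))$, and then transfer this to $\lambda M$ using $\lambda M\in\ag$ and the syzygy depth comparison. The only difference is presentational: you phrase the transfer as a pointwise equivalence of Serre conditions, while the paper argues at a witness prime produced by Proposition 2.2(1).
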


\begin{proof}
We put $W=M^{\dag}$, and $\lambda_{W}=id_{M^{\dag}}$. We may assume $0<G_{C}$-$\dim(M)$.
Since $A_{0}(\E^{\sg+1}(\lambda_{W}))\subseteq X_{\sg}(C)$,
we have $\sg-1=\SSS(\OO^{1}(\lambda_{W}))$ by Theorem $3.1$ $(B)$.
There exist $p\in \Spec$ such that $\sg-1=\dpp(\OO^{1}(\lambda_{W}))<\dpp(C)$ by Proposition $2.2$ $(1)$.
Thererfore we get $\sg-2=\dpp(\Tr_{C}M)$. Since $\lambda M \in \ag$, we have $\sg-2=\dpp(\Tr M)$.
Therefore we get $\sg-1=\dpp(\lambda M)<\dpp(R)$. Therefore we have $\widetilde{S}(\lambda M) \le \sg-1$.
Similarly we have also  $\widetilde{S}(\lambda M) \ge \sg-1$.
Therefore the assertion follows.
\end{proof}  

\begin{corollary}\cite[Theorem 4.13]{S2}
Let $M$ be a horizontally linked $R$-module of finite $G_{C}$-dimension and $\lambda M\in \ag$. then
$$\rgt=\inf \{ \depth_{R_{p}}((\lambda M)_{p})\mid p\in NG_{C}(M)
\}$$
\end{corollary}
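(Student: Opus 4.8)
The plan is to reuse the set-up of the previous two corollaries, namely $W=M^{\dag}$ and $\lambda_{W}=\mathrm{id}_{M^{\dag}}$, so that $\OO^{1}(\lambda_{W})=\Omega\,\Tr_{C}M$ and $\rgt=\sg-1$, and to turn the equality $\sg-1=\SSS(\OO^{1}(\lambda_{W}))$ into the desired infimum by applying Proposition $2.2$ $(1)$. First I would dispose of the trivial case $G_{C}$-$\dim M=0$: then $M$ is totally $C$-reflexive, so $NG_{C}(M)=\emptyset$ and the right-hand side is $\inf\emptyset=\infty$, while $\Ext_{R}^{i}(M,C)=0$ for all $i>0$ forces $\rgt=\infty$ as well; hence I may assume $0<G_{C}$-$\dim M<\infty$.

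Second, exactly as in the proof of Corollary $3.3$, finiteness and positivity of $G_{C}$-dimension give $A_{0}(\E^{\sg+1}(\lambda_{W}))\subseteq X_{\sg}(C)$, so Theorem $3.1$ $(B)$ (with $i=1$) yields $\sg-1=\SSS(\OO^{1}(\lambda_{W}))$, and together with $\rgt=\sg-1$ this reads $\rgt=\SSS(\OO^{1}(\lambda_{W}))$. Now Proposition $2.2$ $(1)$ rewrites the right-hand side as $\inf\{\,\depth_{R_{p}}((\OO^{1}(\lambda_{W}))_{p})\mid p\in Y^{C}(\OO^{1}(\lambda_{W}))\,\}$. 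Since $C$ is semidualizing we have $\depth_{R_{p}}C_{p}=\depth_{R_{p}}R_{p}$, so $Y^{C}(\OO^{1}(\lambda_{W}))=\{\,p\mid \depth_{R_{p}}((\OO^{1}(\lambda_{W}))_{p})<\depth_{R_{p}}R_{p}\,\}$; and on this locus the depth computation already carried out in Corollary $3.2$ — passing from $\Tr_{C}M$ to $\Tr M$ via $\lambda M\in\ag$ together with the syzygy relations $\OO^{1}(\lambda_{W})=\Omega\,\Tr_{C}M$ and $\lambda M=\Omega\,\Tr M$ — gives $\depth_{R_{p}}((\OO^{1}(\lambda_{W}))_{p})=\depth_{R_{p}}((\lambda M)_{p})$.

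Third, I must identify the index set $Y^{C}(\OO^{1}(\lambda_{W}))$ with the non-free locus $NG_{C}(M)$. The route I would take is the Auslander--Bridger depth formula for $G_{C}$-dimension (\cite{S4}): over $R_{p}$, finiteness of $G_{C_{p}}\text{-}\dim M_{p}$ gives $G_{C_{p}}\text{-}\dim_{R_{p}}M_{p}=\depth_{R_{p}}R_{p}-\depth_{R_{p}}M_{p}$, so $NG_{C}(M)=\{\,p\mid \depth_{R_{p}}M_{p}<\depth_{R_{p}}R_{p}\,\}$, and the same holds for any module of finite $G_{C}$-dimension. Because the $G_{C}$-dimension-zero property is preserved by syzygy, transpose and horizontal linkage, one has $NG_{C}(M)=NG_{C}(\lambda M)$; applying the formula to $\lambda M$ identifies $NG_{C}(M)$ with $\{\,p\mid \depth_{R_{p}}((\lambda M)_{p})<\depth_{R_{p}}R_{p}\,\}$, which by the depth equality of the previous step is precisely $Y^{C}(\OO^{1}(\lambda_{W}))$. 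Substituting both identifications into the infimum gives $\rgt=\inf\{\,\depth_{R_{p}}((\lambda M)_{p})\mid p\in NG_{C}(M)\,\}$, as required.

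I expect the third step — the clean identification $Y^{C}(\OO^{1}(\lambda_{W}))=NG_{C}(M)$ — to be the main obstacle: one has to be certain that the Auslander--Bridger equality is available at every prime (this is where finiteness of $G_{C}$-dimension is genuinely used) and that the invariance $NG_{C}(M)=NG_{C}(\lambda M)$ holds under exactly the hypothesis $\lambda M\in\ag$. The boundary values $\sg\le 1$, where $\rgt\le 0$ and the convention $0\le\SSS(-)$ intervenes, will also require a separate short verification.
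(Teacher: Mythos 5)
Your set-up and your first two steps essentially track the paper ($W=M^{\dag}$, $\lambda_{W}=\mathrm{id}_{M^{\dag}}$, reduction to $0<G_{C}$-$\dim(M)$, Theorem 3.1 $(B)$, and the transfer from $\Tr_{C}M$ to $\Tr M$ to $\lambda M$ using $\lambda M\in\ag$). The gap is exactly where you predicted it: the claimed identity $Y^{C}(\OO^{1}(\lambda_{W}))=NG_{C}(M)$ is false in general; only the inclusion $Y^{C}(\OO^{1}(\lambda_{W}))\subseteq NG_{C}(M)$ holds. A prime $p$ can lie in $NG_{C}(M)$ (i.e.\ $M_{p}$ is not totally $C_{p}$-reflexive) while the syzygy $\OO^{1}(\lambda_{W})_{p}=\Omega(\Tr_{C}M)_{p}$ already has full depth --- for instance whenever $\depth_{R_{p}}(\Tr_{C}M)_{p}\ge\depth R_{p}-1$, the depth lemma gives $\depth_{R_{p}}(\OO^{1}(\lambda_{W}))_{p}\ge\depth R_{p}=\depth_{R_{p}}C_{p}$, so $p\notin Y^{C}(\OO^{1}(\lambda_{W}))$. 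Your proposed repair does not close this: the hypothesis is $\lambda M\in\ag$ (membership in the Auslander class), not that $\lambda M$ has finite $G_{C}$-dimension at every prime, so the Auslander--Bridger equality $G_{C_{p}}$-$\dim(\lambda M)_{p}=\depth R_{p}-\depth(\lambda M)_{p}$ is not available for $\lambda M$; and the depth identification $\depth_{R_{p}}(\OO^{1}(\lambda_{W}))_{p}=\depth_{R_{p}}(\lambda M)_{p}$ imported from Corollary 3.2 is only established on the locus where this depth drops below $\depth_{R_{p}}C_{p}$, so it cannot be used to compare the two index sets outside $Y^{C}(\OO^{1}(\lambda_{W}))$.

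The paper avoids the set identity altogether by a two-sided squeeze. It computes the infimum $T^{U}(\lambda M)=\sg-1$ for the two choices $U=Y^{C}(\Tr_{C}M)$ and $U=X_{\sg-1}(C)$ (via Theorem 3.1 $(B)$, Proposition 2.3 $(5)$, Proposition 2.2 $(1)(2)$, $\lambda M\in\ag$, and a syzygy shift from $\Tr M$ to $\lambda M$), and then uses only the sandwich $Y^{C}(\Tr_{C}M)\subseteq NG_{C}(M)\subseteq X_{\sg-1}(C)$: since the infimum over the inner and the outer set agree, the infimum over the intermediate set $NG_{C}(M)$ is forced to equal $\sg-1=\rgt$. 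To repair your argument, replace the equality $Y^{C}(\OO^{1}(\lambda_{W}))=NG_{C}(M)$ by this sandwich; concretely, you must additionally verify $NG_{C}(M)\subseteq X_{\sg-1}(C)$ (which follows from $\mathrm{r.grade}_{R_{p}}(M_{p},C_{p})\le G_{C_{p}}$-$\dim M_{p}\le\depth C_{p}$ on $NG_{C}(M)$) and that $T^{X_{\sg-1}(C)}(\lambda M)=\sg-1$, rather than relying on the lower bound coming from $Y^{C}$ alone.
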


\begin{proof}
We put $W=M^{\dag}$, and $\lambda_{W}=id_{M^{\dag}}$. We may assume $0<G_{C}$-$\dim(M)$.
Since $A_{0}(\E^{\sg+1}(\lambda_{W}))\subseteq X_{\sg}(C)$,
we have $A_{0}(\E^{\sg+1}(\lambda_{W}))\subseteq X_{\sg-1}(C)$ clearly.
Therefore we have   
$\sg-2=T^{X_{t(\lambda_{W})-1}(C)}(\Tr_{C}M)=T^{Y^{C}(\Tr_{C}M)}(\Tr_{C}M)$ by Therorem $3.1$ $(B)$ and Proposition $2.3$ $(5)$.
Since $\lambda M \in \ag$,  we have
$\sg-2=T^{X_{t(\lambda_{W})-1}(C)}(\Tr M)=T^{Y^{C}(\Tr_{C}M)}(\Tr M)$.
Then we get  $\sg-1=T^{X_{t(\lambda_{W})-1}(C)}(\lambda M)=T^{Y^{C}(\Tr_{C}M)}(\lambda M)$ by Proposition $2.2$ $(2)$, $2.3$ $(1)$ 
and $T^{Y^{C}(\Tr_{C}M)}(\Tr M)<T^{Y^{C}(\Tr_{C}M)}(R)$ .
Since $Y^{C}(\Tr_{C}M)\subseteq NG_{C}(M)\subseteq X_{t(\lambda_{W})-1}(C)$,
the assertion follows.
\end{proof}
\begin{corollary}\cite[Theorem 4.12]{S2}
Let $R$ be a local ring, $M$ an $R$-module with $0<G_{C}$-$dim_{R}(M)<\infty$ and $\lambda M \in \ag$.
If $M$ is horizontally linked then the following conditions are equivalent.
\begin{enumerate}[$(i)$]
\item
$\depth (M)=syz(M)=\mathrm{r.grade}(\lambda M)$;
\item
$m\in \Ass_{R}(\Ext_{R}^{\mathrm{r.grade}(\lambda M)}(\lambda M, R))$;
\item
$\depth M \le \depth M_{p}$ for each $p\in NG_{C}(M)$.
\end{enumerate}
\end{corollary}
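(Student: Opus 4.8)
The plan is to specialize Theorem $3.1$ to the local case, imitating the proofs of Corollaries $3.2$--$3.4$. First I would set $W=M^{\dag}$ and $\lambda_{W}=\id_{M^{\dag}}$, so that $\E^{1}(\lambda_{W})=\E^{2}(\lambda_{W})=0$ and $\E^{i}(\lambda_{W})=\Ext_{R}^{i-2}(M,C)$ for $i\ge 3$; hence $\sg\ge 2$, and since $0<\gc\text{-}\dim_{R}M<\infty$ the inclusion $A_{0}(\E^{\sg+1}(\lambda_{W}))\subseteq X_{\sg}(C)$ used in the earlier corollaries is available. Because $R$ is local with maximal ideal $m$, for every finitely generated $N$ one has $d_{m}(N)=\depth_{R}N$ and $m\in\Ass_{R}N\iff d_{m}(N)=0\iff m\in A_{0}(N)$; this is the device that turns the closed-point statements (ii) and (iii) into membership of $m$ in the sets of Theorem $3.1$. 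The dictionary between $M$ and its link (as in Corollaries $3.2$--$3.4$) gives $n=\mathrm{r.grade}(\lambda M)=\rgt=\sg-1$, and, as recorded in the proof of Corollary $3.2$, $\depth_{R_{p}}((\OO^{1}(\lambda_{W}))_{p})=\depth_{R_{p}}((\lambda M)_{p})$ for every $p$.

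I would first reduce (ii) to the numerical equality $\depth_{R}\lambda M=n$. Condition (ii) is $m\in A_{0}(\Ext_{R}^{n}(\lambda M,R))$; passing from the $R$-dual to the $C$-dual via $\lambda M\in\ag$ (the Auslander-class isomorphisms that replaced $\Tr M$ by $\Tr_{C}M$ in Corollary $3.2$) identifies $\Ass_{R}\Ext_{R}^{n}(\lambda M,R)$ with $\Ass_{R}\Ext_{R}^{\sg-1}(M,C)=A_{0}(\E^{\sg+1}(\lambda_{W}))$. By the inclusion above together with Theorem $3.1$(A) in the case $\sg\ge 2$, $i=1$, this set equals $A_{\sg-1}(\OO^{1}(\lambda_{W}))\cap Y^{C}(\OO^{1}(\lambda_{W}))$, so $m$ belongs to it exactly when $\depth_{R}\OO^{1}(\lambda_{W})=\sg-1<\depth_{R}C$, that is, when $\depth_{R}\lambda M=n$. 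Equivalently one may quote Corollary $3.2$ itself: as $\gc\text{-}\dim_{R}M>0$, $m\in\Ass_{R}\Ext_{R}^{n}(M,C)$ iff $\depth_{R}\lambda M=n$. Either way, (ii) $\iff\depth_{R}\lambda M=n$.

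Next I would handle (iii) and (i). Corollary $3.4$ gives $n=\inf\{\depth_{R_{p}}((\lambda M)_{p})\mid p\in NG_{C}(M)\}$, and since $\gc\text{-}\dim_{R}M>0$ we have $m\in NG_{C}(M)$, so $\depth_{R}\lambda M\ge n$. Condition (iii) asserts that $\depth_{R}M$ is the least value of $\depth_{R_{p}}M_{p}$ over $NG_{C}(M)$; comparing $\depth_{R_{p}}M_{p}$ with $\depth_{R_{p}}((\lambda M)_{p})$ on $NG_{C}(M)$ through the horizontal-link sequences and $\lambda M\in\ag$, this reduces to $\depth_{R}M=n$. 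Condition (i) is the equality case of the chain of inequalities relating $\mathrm{r.grade}(\lambda M)$, $syz(M)$ and $\depth_{R}M$ valid in this setting, so it likewise reads $\depth_{R}M=n$; and the same link sequences give $\depth_{R}M=n\iff\depth_{R}\lambda M=n$. Hence all three conditions are equivalent to $\depth_{R}\lambda M=n$.

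The step I expect to be the main obstacle is exactly this two-sided bookkeeping: on the one hand matching $\Ext_{R}^{n}(\lambda M,R)$ with $\E^{\sg+1}(\lambda_{W})=\Ext_{R}^{\sg-1}(M,C)$ across the $R$-dual/$C$-dual switch, for which $\lambda M\in\ag$ is indispensable; on the other hand transporting the depth comparisons between $M$, $\lambda M$ and $\OO^{1}(\lambda_{W})$ so that they all respect the single index $n=\sg-1$. Once these identifications are secured, the three conditions collapse to the single criterion $m\in A_{0}(\E^{\sg+1}(\lambda_{W}))$, which is read off from the set equality of Theorem $3.1$(A) at the closed point $m$.
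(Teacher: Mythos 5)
Your setup diverges from the paper's at the very first step, and the divergence creates two genuine gaps. The paper proves this corollary by taking $W=M$ and $\lambda_{W}\colon M\rightarrow M^{\dag\dag}$ the natural biduality map, \emph{not} $W=M^{\dag}$ with the identity as in Corollaries 3.2--3.4. With that choice $\E^{i}(\lambda_{W})=\Ext_{R}^{i}(\Tr_{C}M,C)\cong\Ext_{R}^{i}(\Tr M,R)$, condition (ii) becomes $m\in A_{0}(\E^{\sg+1}(\lambda_{W}))$ via $\mathrm{r.grade}(\lambda M)=\mathrm{r.grade}(\Tr M)-1$, and Theorem 3.1(A) then hands you the set $A_{\sg}(W)\cap Y^{C}(W)=A_{\sg}(M)\cap Y^{C}(M)$, i.e.\ a statement directly about $\depth M_{p}$ --- which is what (i) and (iii) concern; the proof finishes with $\sg=\SSS(M)$ (from finite $\gc$-dimension), Proposition 2.2(1), and $NG_{C}(M)=Y^{C}(M)$.

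First gap: you identify $\Ass\Ext_{R}^{n}(\lambda M,R)$ with $A_{0}(\E^{\sg+1}(\id_{M^{\dag}}))=\Ass\Ext_{R}^{n}(M,C)$. The isomorphisms actually available are $\Ext_{R}^{n}(\lambda M,R)\cong\Ext_{R}^{n+1}(\Tr M,R)\cong\Ext_{R}^{n+1}(\Tr_{C}M,C)$, and that last module is $\E^{n+1}$ of the biduality map $M\rightarrow M^{\dag\dag}$, which for $n+1\ge 3$ equals $\Ext_{R}^{n-1}(M^{\dag},C)$, not $\Ext_{R}^{n}(M,C)$. So the module appearing in (ii) is not the $\E^{\sg+1}$ of your chosen $\lambda_{W}$; this mismatch is precisely why the paper switches to the biduality map for this corollary. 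Second gap: even granting your reduction of (ii) to $\depth\lambda M=n$, you still need $\depth M=n\iff\depth\lambda M=n$, together with the analogous pointwise comparison on $NG_{C}(M)$ for (iii); you assert that ``the same link sequences'' give this, but the sequences $0\rightarrow\lambda M\rightarrow P\rightarrow\Tr M\rightarrow 0$ and $0\rightarrow M\rightarrow Q\rightarrow\Tr\lambda M\rightarrow 0$ relate $\depth\lambda M$ to $\depth\Tr M$ and $\depth M$ to $\depth\Tr\lambda M$, which are different modules, and no direct equivalence between $\depth M$ and $\depth\lambda M$ follows from them. That missing bridge is the actual content of the corollary; in the paper it is supplied by applying Theorem 3.1(A) with $W=M$, so that the $\Ext$-side and the depth-side of the set equality are already matched without any transfer between $M$ and $\lambda M$.
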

\begin{proof}
we assume that $W=M$ and $\lambda_{W}; M\rightarrow M^{\dag \dag}$ is natural map. 
 Note that
    $\mathrm{r.grade}(\lambda M)=\mathrm{r.grade}(\Tr M)-1$ and $(ii)$ if and only if $m\in \Ass(\Ext_{R}^{\mathrm{r.grade}(\Tr M)}(\Tr M, R))$ holds because $M$ is horizontally linked.
  We have $\Ext_{R}^{i}(\Tr_{C}M, C)\cong \Ext_{R}^{i}(\Tr M, R)$  for $i\ge 0$ ; see proof of \cite[Theorem $4.6$]{S2}. 
  We have $\sg=\SSS(M)$ by \cite[Proposition $2.4$]{S2} because $G_{C}$-$dim_{R}(M)<\infty$.
   Then the assetion follows by Theorem $3.1$ $(A)$ and Proposition $2.2(1)$ since $NG_{C}(M)=Y^{C}(M)$.  
\end{proof}


\end{document}